\newtheorem{remark}[theorem]{Remark}
\newtheorem{example}[theorem]{Example}
\begin{document}

\bibliographystyle{plain}
\title{Nonuniqueness of semidirect decompositions for semidirect products with
directly decomposable factors and applications for dihedral groups}

\author{
Peteris\ Daugulis\thanks{Department of Mathematics, Daugavpils
University, Daugavpils, LV-5400, Latvia (peteris.daugulis@du.lv).
} }

\pagestyle{myheadings} \markboth{P.\ Daugulis}{Nonuniqueness of
semidirect decompositions and applications for dihedral groups}
\maketitle

\begin{abstract} Nonuniqueness of semidirect decompositions of groups is
an insufficiently studied question in contrast to direct
decompositions. We obtain some results about semidirect
decompositions for semidirect products with factors which are
nontrivial direct products. We deal with a special case of
semidirect product when the twisting homomorphism acts diagonally
on a direct product, as well as for the case when the extending
group is a direct product. We give applications of these results
in the case of generalized dihedral groups and classic dihedral
groups $D_{2n}$. For $D_{2n}$ we give a complete description of
semidirect decompositions and values of minimal permutation
degrees.

\end{abstract}

\begin{keywords}
semidirect product, direct product, diagonal action, generalized
dihedral group
\end{keywords}
\begin{AMS}
20E22, 20D40.
\end{AMS}

\section{Introduction}\

\subsection{Background} The aim of this article is
to study semidirect decompositions of groups both in general and
special cases.

By the well known Krull-Remak-Schmidt theorem the multiset of
isomorphism types of indecomposable direct factors for groups
satisfying ascending and descending chain conditions on normal
subgroups does not depend on the order of factors. Thus direct
decompositions of such groups, e.g. finite groups, may be
considered understood.

Few results of this type are known for semidirect and Zappa-Szep
decompositions. One can mention the Schur-Zassenhaus theorem as an
example.

We consider cases when the base group or the extending group is a
direct product. We present a general result which allows to
characterize some semidirect decompositions in the case when the
base group is a direct product and the twisting homomorphism acts
diagonally, Prop.\ref{4}.
Finally we obtain a
nonuniqueness result of semidirect decomposition in the case when
the extending group is a direct product, Prop.\ref{7}. We give
applications of some of these results in the case which is
relatively easy to understand - finite dihedral groups, both
classic and generalized.

We use traditional multiplicative notation for general groups and
additive notation for abelian groups. In this article the dihedral
group of order $m=2n$ is denoted by $D_{m}$ - $D_{m}=\langle a,x|
a^n=e, x^2=e, xax=a^{-1}\rangle$, $m=2n$.  For any $m|n$ we
usually identify $\mathbb{Z}_{m}$ with the corresponding subgroup
of $\mathbb{Z}_{n}$. $Q_{m}$ denotes the dicyclic group of order
$m=4k$ - $Q_{m}=\langle a,x|a^{2k}=e, x^2=a^{k},
x^{-1}ax=a^{-1}\rangle$.

The cyclic group of order $m$ is denoted by $\mathbb{Z}_{m}$, in
additive notation we assume that $\mathbb{Z}_{m}=\langle
1\rangle$. In this article we identify elements of
$\mathbb{Z}_{m}$ and corresponding minimal nonnegative integers.
We use this identification for powers of group elements. For
example, if $r\in \mathbb{Z}_{3}$ and $r\equiv 2 (mod\ 3)$, then
$a^r=a^2$ for any group element $a$.

\subsection{Basic facts about semidirect products}\

We remind the reader that an external semidirect product of groups
$N$ (\sl base group\rm) and $H$ (\sl extending group\rm) is the
group $N\rtimes_{\varphi}H=(N\times H,\cdot)$ where the group
product is defined on the Cartesian product $N\times H$ using a
group homomorphism (\sl twisting homomorphism\rm) $\varphi\in
Hom(H, Aut(N))$ as follows: $(n_{1},h_{1})\cdot
(n_{2},h_{2})=(n_{1}\varphi(h_{1})(n_{2}),h_{1}h_{2})$. Sets
$\widetilde{N}=N\times \{e_{H}\}$ and
$\widetilde{H}=\{e_{N}\}\times H$ are subgroups in $N\times H$.

A group $G$ is an internal semidirect product of its subgroups $N$
and $H$ if $N$ is a normal subgroup, $G=NH$ and $N\cap H=\{e\}$.
If a group $G$ is finite then for $G$ to be an internal semidirect
product $NH$ is equivalent to 1) $N$ being normal in $G$, 2)
$|N|\cdot |H|=|G|$ and 3) $N\cap H=\{e\}$. In the internal case
the twisting homomorphism $H\rightarrow Aut(N)$ is given by the
map $h\mapsto (n\rightarrow hnh^{-1})$, for any $n\in N$, $h\in
H$.

 Both expressions will be called semidirect decompositions of
$G$. If the twisting homomorphism is not discussed, we omit it and
use the notation $\rtimes$. We consider direct product to be a
special case of semidirect product with the twisting homomorphism
being trivial. For relevant treatment see \cite{R1}, \cite{R2}.

A nontrivial semidirect product may admit more than one semidirect
decomposition. Examples are abundant starting from groups of order
$8$.

\begin{example} Twisting homomorphisms are not given in these examples.

$D_{8}\simeq \mathbb{Z}_{4}\rtimes \mathbb{Z}_{2}\simeq
\mathbb{Z}^{2}_{2}\rtimes \mathbb{Z}_{2}$. $\Sigma_{4}\simeq
A_{4}\rtimes \mathbb{Z}_{2}\simeq \mathbb{Z}^{2}_{2}\rtimes
\Sigma_{3}$.

There are semidirect products such that $\mathbb{Z}_{3}\rtimes
Q_{8}\simeq Q_{24}$, but $Q_{8}\rtimes \mathbb{Z}_{3}\simeq
SL(2,\mathbb{F}_{3})$.  On the other hand, there is a group
$G_{32}$ of order $32$, such that $G_{32}\simeq D_{8}\rtimes
\mathbb{Z}^{2}_{2}\simeq \mathbb{Z}^{2}_{2}\rtimes D_{8}$.

Finally, there is a group $G_{24}$ of order $24$ which can be
decomposed in $5$ different ways: $G_{24}\simeq
\mathbb{Z}_{3}\rtimes D_{8}\simeq \mathbb{Z}^{2}_{2}\rtimes
\mathbb{Z}_{3}\simeq D_{12}\rtimes \mathbb{Z}_{2}\simeq
(\mathbb{Z}_{3}\times \mathbb{Z}^{2}_{2})\rtimes
\mathbb{Z}_{2}\simeq Q_{12}\rtimes \mathbb{Z}_{2}$.

\end{example}

\section{Main results}

\subsection{Diagonal semidirect products}

\subsubsection{Automorphisms of direct products}\label{5}

We introduce a linear algebra style notation for direct products
of groups.

Let $G=G_{1}\times G_{2}$. Encode the element $(g_{1},g_{2})$ as a
column $\left[%
\begin{array}{c}
  g_{1} \\
  \hline
  g_{2} \\
\end{array}%
\right] $. If $\varphi\in Aut(G)$, then $\varphi(
\left[%
\begin{array}{c}
  g_{1} \\
  \hline
  g_{2} \\
\end{array}%
\right])=
\left[%
\begin{array}{c}
  \varphi_{1}(g_{1},g_{2}) \\
  \hline
  \varphi_{2}(g_{1},g_{2}) \\
\end{array}%
\right] $. One can theck, that for all relevant parameter values
$\varphi_{i}$ satisfy the following properties:
\begin{enumerate}

\item $\varphi_{i}(ab,e)=\varphi_{i}(a,e)\varphi_{i}(b,e)$,

\item $\varphi_{i}(e,ab)=\varphi_{i}(e,a)\varphi_{i}(e,b)$,

\item
$\varphi_{i}(a,b)=\varphi_{i}(a,e)\varphi_{i}(e,b)=\varphi_{i}(e,b)\varphi_{i}(a,e)$,

\end{enumerate}

Define $\varphi_{11}(g_{1})=\varphi_{1}(g_{1},e)$,
$\varphi_{12}(g_{2})=\varphi_{1}(e,g_{2})$,
$\varphi_{21}(g_{1})=\varphi_{2}(g_{1},e)$,
$\varphi_{22}(g_{2})=\varphi_{2}(e,g_{2})$, for all $g_{i}\in
G_{i}$. All functions $\varphi_{ij}$ are group homomorphisms. Thus
$\varphi_{i}(g_{1},g_{2})=\varphi_{i}(g_{1},e)\varphi_{i}(e,g_{2})=\varphi_{i1}(g_{1})\varphi_{i2}(g_{2})$.

We can encode action of $\varphi$ as follows: $ \varphi(
\left[%
\begin{array}{c}
  g_{1} \\
  \hline
  g_{2} \\
\end{array}%
\right])=
\left[%
\begin{array}{c|c}
  \varphi_{11}(g_{1}) & \varphi_{12}(g_{2}) \\
  \hline
  \varphi_{21}(g_{1}) & \varphi_{22}(g_{2}) \\
\end{array}%
\right]
 $. Thus an automorphism $\varphi\in Aut(G_{1}\times G_{2})$ is
 determined by $4$ group homomorphisms $\varphi_{ij}:G_{j}\rightarrow
 G_{i}$.

\begin{definition}
 We call $\varphi\in Aut(G_{1}\times G_{2})$, $G_{1}\neq \{e\}$, $G_{2}\neq \{e\}$, a \sl diagonal
 automorphism\rm\ if $\varphi_{12}$ and $\varphi_{21}$ are
 trivial homomorphisms.
\end{definition}

\begin{definition}
We call $(G_{1}\times G_{2})\rtimes_{\varphi} H$ a \sl diagonal
semidirect product\rm\ if $\varphi(h)$ is a diagonal $G_{1}\times
G_{2}$-automorphism for any $h\in H$. Explicitly, there are group
homomorphisms $\varphi_{ii}(h):G_{i}\rightarrow G_{i}$ such that
$\varphi(h)(g_{1},g_{2})=(\varphi_{11}(h)(g_{1}),\varphi_{22}(h)(g_{2}))$.
\end{definition}

\begin{remark} $Ker(\varphi)=Ker(\varphi_{11})\cap Ker(\varphi_{22})$.
Note that $G_{i}$ may not be indecomposable as direct factors.
Described encodings and diagonal semidirect products can be
generalized to cases when the base groups splits into an arbitrary
finite number of direct factors. Similar encodings can be used
considering internal semidirect products.

\end{remark}

\subsubsection{Semidirect decompositions of diagonal semidirect products}\

We present a proposition showing nonuniqueness of semidirect
decomposition for diagonal semidirect products. Va\-gue\-ly
speaking, any direct factor of the base group which is invariant
with respect to the initial twisting homomorphism can be moved to
the extending group (nonnormal semidirect factor) to enlarge it.
The new twisting homomorphism is such that the moved direct factor
acts trivially on the remaining part of the base group.

\begin{proposition}\label{4} Let $N_{1},N_{2},H$ be groups. Let $G=(N_{1}\times N_{2})\rtimes_{\varphi}H$
be a diagonal semidirect product,
$\varphi(h)(g_{1},g_{2})=(\varphi_{11}(h)(g_{1}),\varphi_{22}(h)(g_{2}))$.
Then the following statements hold.
\begin{enumerate}

\item $G\simeq N_{1}\rtimes_{\Phi_{11}}
(N_{2}\rtimes_{\varphi_{22}}H)$, for some $\Phi_{11}\in
Hom(N_{2}\rtimes_{\varphi_{22}} H,Aut(N_{1}))$.

\item
$Ker(\Phi_{11})=\widetilde{N_{2}}\widetilde{Ker(\varphi_{11})}$.

\item If $\varphi_{11}(h)=id_{N_{1}}$, for any $h\in H$, i.e.
$\varphi(h)(
\left[%
\begin{array}{c}
  g_{1} \\
  \hline
  g_{2} \\
\end{array}%
\right] )=
\left[%
\begin{array}{c|c}
  g_{1} & e \\
  \hline
  e & \varphi_{22}(h)(g_{2}) \\
\end{array}%
\right] $, then $G\simeq N_{1}\times (N_{2}\rtimes_{\varphi_{22}}
H).$

\end{enumerate}
\end{proposition}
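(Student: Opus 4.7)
The plan is to exhibit $G$ as an internal semidirect product whose normal factor is a copy of $N_1$ and whose complement is a copy of $N_2\rtimes_{\varphi_{22}}H$, then read off the twisting homomorphism directly from conjugation. Concretely, I would work with the subsets
\[
\widetilde{N_1}=\{((n_1,e_{N_2}),e_H):n_1\in N_1\},\qquad K=\{((e_{N_1},n_2),h):n_2\in N_2,\ h\in H\}
\]
of $G$, and aim to show that $G$ is the internal semidirect product of $\widetilde{N_1}$ by $K$.

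The key technical step uses the diagonal hypothesis twice. First, conjugating $((n_1,e),e_H)$ by $((e,n_2),h)$ yields $((\varphi_{11}(h)(n_1),e),e_H)$, because the $N_2$-coordinate cancels ($N_1$ and $N_2$ commute inside the direct product, and $\varphi_{21}$ is trivial); this simultaneously shows $\widetilde{N_1}\trianglelefteq G$ and computes the induced conjugation action on $\widetilde{N_1}$. Second, a direct multiplication check shows that $K$ is closed under the product of $G$ and that $(n_2,h)\mapsto((e_{N_1},n_2),h)$ is a group isomorphism $N_2\rtimes_{\varphi_{22}}H\to K$. Since $\widetilde{N_1}\cap K=\{e_G\}$ and $\widetilde{N_1}\cdot K=G$ by inspection of coordinates, statement 1 follows, with twisting homomorphism $\Phi_{11}(n_2,h)=\varphi_{11}(h)$; this map indeed lands in $Aut(N_1)$ and is a homomorphism because it factors as the projection $N_2\rtimes_{\varphi_{22}}H\twoheadrightarrow H$ followed by $\varphi_{11}$.

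Statements 2 and 3 then drop out of this explicit formula. A pair $(n_2,h)$ lies in $Ker(\Phi_{11})$ iff $h\in Ker(\varphi_{11})$, and this set is precisely $\widetilde{N_2}\cdot\widetilde{Ker(\varphi_{11})}$, giving 2. If $\varphi_{11}(h)=\mathrm{id}_{N_1}$ for every $h$, then $\Phi_{11}$ is trivial, so the semidirect product in 1 collapses to a direct product, giving 3. I expect the only real obstacle to be notational bookkeeping with triples; the substance of the argument is contained entirely in the single conjugation computation that uses the diagonal form of $\varphi$.
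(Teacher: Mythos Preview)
Your proposal is correct and arrives at exactly the same explicit twisting homomorphism $\Phi_{11}(n_{2},h)=\varphi_{11}(h)$ that the paper uses; parts 2 and 3 are then deduced identically. The only difference is packaging: the paper works externally, writing down the bijection $f\bigl((n_{1},n_{2}),h\bigr)=(n_{1},(n_{2},h))$ between the two external semidirect products and verifying by a direct product computation that $f$ is a homomorphism, whereas you work internally, exhibiting $\widetilde{N_{1}}$ and $K$ inside $G$ and reading $\Phi_{11}$ off from conjugation. Your route makes it transparent \emph{why} the formula for $\Phi_{11}$ is what it is (it is literally the conjugation action, and your observation that it factors as the projection to $H$ followed by $\varphi_{11}$ explains at once that it is a homomorphism into $Aut(N_{1})$); the paper's route is a single mechanical identity check. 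The two are standard dual viewpoints on the same argument, and the underlying computation---that the $N_{2}$-coordinate drops out because $\varphi$ is diagonal---is identical in both.
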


\begin{proof}

1. Consider $N_{1}\rtimes_{\Phi_{11}} (N_{2}\rtimes_{\varphi_{22}}
H)$ where $\Phi_{11}(n_{2},h)=\varphi_{11}(h)$. It is directly
checked that $\Phi_{11}\in Hom(N_{2}\rtimes H,Aut(N_{1}))$. We
will prove that
$$(N_{1}\times N_{2})\rtimes_{\varphi} H \simeq
N_{1}\rtimes_{\Phi_{11}} (N_{2}\rtimes_{\varphi_{22}} H).$$ Define
a bijective map $f:(N_{1}\times N_{2})\rtimes_{\varphi}
H\rightarrow N_{1}\rtimes_{\Phi_{11}} (N_{2}\rtimes_{\varphi_{22}}
H)$ by $f((n_{1},n_{2}),h)=(n_{1},(n_{2},h))$, for all $n_{i}\in
N_{i}$, $h\in H$. We prove that $f$ is a group homomorphism.

Let $a,a'\in N_{1}$, $b,b'\in N_{2}$, $h,h'\in H$. We have that

$((a,b),h)\cdot ((a',b'),h')=((a,b)\varphi(h)(a',b'),hh')=$

$=((a,b)(\varphi_{11}(h)(a'),\varphi_{22}(h)(b')),hh')=((a\varphi_{11}(h)(a'),b\varphi_{22}(h)(b')),hh').$

On the other hand,

$(a,(b,h))\cdot (a',(b',h'))=(a\Phi_{11}(b,h)(a'),(b,h)\cdot
(b',h'))=$

$=(a\varphi_{11}(h)(a'),(b\varphi_{22}(h)(b'),hh'))$. We see that
$f$ is a group isomorphism.

2. $Ker(\Phi_{11})=\{(n_{2},h)|h\in
Ker(\varphi_{11})\}=\widetilde{N_{2}}\widetilde{Ker(\varphi_{11})}$.

3. In notations given above, $\varphi_{11}(h)=id_{N_{1}}$ implies
$\Phi_{11}(n_{2},h)=id_{N_{1}}$, for any $n_{2}\in N_{2}$, $h\in
H$. Thus
it is the direct product.
\end{proof}

\begin{example}
Let $G=(\mathbb{Z}_{7}\times
\mathbb{Z}_{9})\rtimes_{\varphi}\mathbb{Z}_{3}$, where
$\varphi(1)(
\left[%
\begin{array}{c}
  g_{1} \\
  \hline
  g_{2} \\
\end{array}%
\right] )=
\left[%
\begin{array}{c|c}
  g^{2}_{1}& e \\
  \hline
  e& g^{4}_{2} \\
\end{array}%
\right]$. In additive notation this can be simplified as follows.
$\varphi(1)(
\left[%
\begin{array}{c}
  g_{1} \\
  \hline
  g_{2} \\
\end{array}%
\right] )=
\left[%
\begin{array}{c|c}
  2g_{1}& 0 \\
  \hline
  0& 4g_{2} \\
\end{array}%
\right]=
\left[%
\begin{array}{c|c}
  2 & 0 \\
  \hline
  0 & 4 \\
\end{array}%
\right]
\left[%
\begin{array}{c}
  g_{1} \\
  \hline
  g_{2} \\
\end{array}%
\right]
 $. $G$ can be defined as the subgroup of $\Sigma_{16}$ generated
 by three permutations:
\begin{enumerate}

\item[a)]
 $(1,...,7)$ (generating $\mathbb{Z}_{7}$),

 \item[b)] $(8,...,16)$ (generating
 $\mathbb{Z}_{9}$) and

 \item[c)]
 $\underbrace{(1,2,4)(3,6,5)}_{\mathbb{Z}_{7}}\underbrace{(8,11,14)(9,15,12)}_{\mathbb{Z}_{9}}$
 (generating action of $\mathbb{Z}_{3}$ on $\mathbb{Z}_{7}\times
 \mathbb{Z}_{9}$).
\end{enumerate}

We have that $G\simeq \mathbb{Z}_{7}\rtimes
 (\mathbb{Z}_{9}\rtimes_{4}
 \mathbb{Z}_{3})\simeq \mathbb{Z}_{9}\rtimes (\mathbb{Z}_{7}\rtimes_{2} \mathbb{Z}_{3})$.

\end{example}

\subsection{Directly decomposable extending groups}\

We show that a direct factor of the extending group can be
transferred to the base group.

\begin{proposition} \label{7} Let $N,H_{1},H_{2}$ be groups Then $$N\rtimes_{\varphi} (H_{1}\times H_{2})\simeq
(N\rtimes_{\varphi_{1}} H_{1})\rtimes_{\varphi_{2}} H_{2},$$ where
$\varphi_{1}(h_{1})(n)=\varphi(h_{1},e_{H_{2}})(n)$ and
$\varphi_{2}(h_{2})(n,h_{1})=(\varphi(e_{H_{1}},h_{2})(n),h_{1})$,
for all $n\in N$, $h_{i}\in H_{i}$.

\end{proposition}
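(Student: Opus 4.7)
The plan is to mimic the proof of Proposition \ref{4}: both sides have underlying set $N\times H_{1}\times H_{2}$, so I would define the obvious bijection
$$f:N\rtimes_{\varphi}(H_{1}\times H_{2})\longrightarrow (N\rtimes_{\varphi_{1}}H_{1})\rtimes_{\varphi_{2}}H_{2},\qquad f(n,(h_{1},h_{2}))=((n,h_{1}),h_{2}),$$
and verify that $f$ is a group homomorphism. But before that, I need to check that the target is actually a legitimate semidirect product, which amounts to three preliminary verifications.

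First, $\varphi_{1}\in Hom(H_{1},Aut(N))$: this is immediate, since $\varphi_{1}$ is the composition of $\varphi$ with the canonical injection $h_{1}\mapsto (h_{1},e_{H_{2}})$. Second, for each fixed $h_{2}\in H_{2}$ the map $\varphi_{2}(h_{2}):N\rtimes_{\varphi_{1}}H_{1}\to N\rtimes_{\varphi_{1}}H_{1}$ must be a group automorphism. Third, $\varphi_{2}$ must be a homomorphism $H_{2}\to Aut(N\rtimes_{\varphi_{1}}H_{1})$; this last point is routine since $\varphi_{2}(h_{2}h_{2}')$ acts on the first coordinate by $\varphi(e_{H_{1}},h_{2}h_{2}')=\varphi(e_{H_{1}},h_{2})\varphi(e_{H_{1}},h_{2}')$ and trivially on the second.

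The main obstacle is the second verification. Expanding $\varphi_{2}(h_{2})((n,h_{1})(n',h_{1}'))$ and $\varphi_{2}(h_{2})(n,h_{1})\cdot \varphi_{2}(h_{2})(n',h_{1}')$ shows that preservation of the twisted multiplication reduces to the identity
$$\varphi(e_{H_{1}},h_{2})\bigl(\varphi(h_{1},e_{H_{2}})(n')\bigr)=\varphi(h_{1},e_{H_{2}})\bigl(\varphi(e_{H_{1}},h_{2})(n')\bigr)$$
for all $n'\in N$. This is exactly the statement that $\varphi(h_{1},e_{H_{2}})$ and $\varphi(e_{H_{1}},h_{2})$ commute in $Aut(N)$, which follows by applying $\varphi$ to both sides of the commutation relation $(e_{H_{1}},h_{2})(h_{1},e_{H_{2}})=(h_{1},e_{H_{2}})(e_{H_{1}},h_{2})$ in $H_{1}\times H_{2}$. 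Bijectivity of $\varphi_{2}(h_{2})$ then follows since its inverse is given by $\varphi_{2}(h_{2}^{-1})$.

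Once these preliminaries are in place, the homomorphism property of $f$ is a direct unwinding. Using $\varphi(h_{1},h_{2})=\varphi(h_{1},e_{H_{2}})\varphi(e_{H_{1}},h_{2})$, both $f\bigl((n,(h_{1},h_{2}))\cdot (n',(h_{1}',h_{2}'))\bigr)$ and $f(n,(h_{1},h_{2}))\cdot f(n',(h_{1}',h_{2}'))$ expand to the triple whose first coordinate is $n\cdot \varphi(h_{1},e_{H_{2}})\bigl(\varphi(e_{H_{1}},h_{2})(n')\bigr)$, whose second coordinate is $h_{1}h_{1}'$, and whose third coordinate is $h_{2}h_{2}'$. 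Since $f$ is visibly a bijection of underlying sets, this completes the proof.
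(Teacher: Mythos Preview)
Your proof is correct and follows the same route as the paper: define the obvious bijection $f(n,(h_{1},h_{2}))=((n,h_{1}),h_{2})$ and compare the two products component-by-component, using $\varphi(h_{1},h_{2})=\varphi(h_{1},e_{H_{2}})\varphi(e_{H_{1}},h_{2})$. In fact you are more careful than the paper, which merely asserts ``It is checked that $\varphi_{i}$ are group homomorphisms''; you actually pin down the one nontrivial point there, namely that $\varphi_{2}(h_{2})$ respects the twisted multiplication on $N\rtimes_{\varphi_{1}}H_{1}$ precisely because $\varphi(h_{1},e_{H_{2}})$ and $\varphi(e_{H_{1}},h_{2})$ commute in $Aut(N)$.
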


\begin{proof} It is checked that $\varphi_{i}$ are group homomorphisms.

We prove that the map $f:N\rtimes (H_{1}\times
H_{2})\longrightarrow (N\rtimes H_{1})\rtimes H_{2}$ given by
$f(n,(h_{1},h_{2}))=((n,h_{1}),h_{2})$ is a group homomorphism.

Let $n,n'\in N$, $h_{i},h'_{i}\in H_{i}$.

Consider the product $(n,(h_{1},h_{2}))\cdot (n',(h'_{1},h'_{2}))$
in $N\rtimes_{\varphi}(H_{1}\times H_{2})$:

$(n,(h_{1},h_{2}))\cdot
(n',(h'_{1},h'_{2}))=(n\varphi(h_{1},h_{2})(n'),(h_{1}h'_{1},h_{2}h'_{2}))$.

Consider the product $((n,h_{1}),h_{2})\cdot ((n',h'_{1}),h'_{2})$
in $(N\rtimes H_{1})\rtimes H_{2}$:

$ ((n,h_{1}),h_{2})\cdot ((n',h'_{1}),h'_{2})=
(((n,h_{1})\varphi_{2}(h_{2})(n',h'_{1})),h_{2}h'_{2})=$

$=(((n,h_{1})(\varphi(e,h_{2})(n'),h'_{1})),h_{2}h'_{2})=
((n\varphi_{1}(h_{1})(\varphi(e,h_{2})(n')),h_{1}h'_{1}),h_{2}h'_{2})=$

$=((n\varphi(h_{1},h_{2})(n'),h_{1}h'_{1}),h_{2}h'_{2})$.

We see that both products have equal corresponding components and
thus $f$ is a group isomorphism.
\end{proof}

\begin{example} Let $G=\mathbb{Z}_{7}\rtimes_{\varphi} (\mathbb{Z}_{2}\times
\mathbb{Z}_{3})$ where $\varphi(x,y)(1)\equiv (-1)^{x}2^{y} (mod\
7)$. $G$ can be defined as the subgroup of $\Sigma_{7}$ generated
 by three permutations:
\begin{enumerate}

\item[a)]
 $(1,...,7)$ (generating $\mathbb{Z}_{7}$),

 \item[b)] $(1,6)(2,5)(3,4)$ (generating
 action of $\mathbb{Z}_{2}$ on $\mathbb{Z}_{7}$) and

 \item[c)]
 $(1,2,4)(3,6,5)$ (generating
 action of $\mathbb{Z}_{3}$ on $\mathbb{Z}_{7}$).
\end{enumerate}

Then $G\simeq D_{2\cdot 7}\rtimes \mathbb{Z}_{3}\simeq
(\mathbb{Z}_{7}\rtimes \mathbb{Z}_{3})\rtimes \mathbb{Z}_{2}$.

\end{example}
\subsection{Applications}\

\subsubsection{Generalized dihedral groups}\

We remind the reader that an external semidirect product
$D(A)=A\rtimes_{\varphi} \mathbb{Z}_{2}$ is called \sl generalized
 dihedral group\rm\ provided 1) $A$ is abelian and 2) $\varphi(1)(g)=-g$ for any
$g\in A$, in additive notation. We can also denote $D(A)$ by
$A\rtimes_{-1}\mathbb{Z}_{2}$.

Using the well known classification of finite abelian groups we
can assume that $A=\bigoplus_{i=1}^{n}\mathbb{Z}_{m_{i}}$. We use
linear algebra style encoding - we encode $(g_{1},...,g_{n})\in A$
as a column vector $
\left[%
\begin{array}{cc}
  g_{1} \\
  \hline
  ... \\
  \hline
  g_{n} \\
\end{array}%
\right] $. Notations introduced in section \ref{5} are modified
for additive group notation. The action of the twisting
homomorphism is given by scalar or matrix multiplication:
$\varphi(1)(
\left[%
\begin{array}{cc}
  g_{1} \\
  \hline
  ... \\
  \hline
  g_{n} \\
\end{array}%
\right] )=
\left[%
\begin{array}{c|c|c}
  (-g_{1}) & 0 & 0 \\
  \hline
  0 & ... & 0 \\
  \hline
  0 & 0 & (-g_{n}) \\
\end{array}%
\right]= -
\left[%
\begin{array}{cc}
  g_{1} \\
  \hline
  ... \\
  \hline
  g_{n} \\
\end{array}%
\right]=
 (-\textbf{E}_{n})\cdot
\left[%
\begin{array}{cc}
  g_{1} \\
  \hline
  ... \\
  \hline
  g_{n} \\
\end{array}%
\right] $, where $\textbf{E}_{n}$ is the $n\times n$ identity
matrix.

\begin{remark} Generalized dihedral groups are diagonal semidirect
products with an injective twisting homomorphism.

\end{remark}

\begin{proposition} Let $A=\bigoplus_{i=1}^{n}\mathbb{Z}_{m_{i}}$,  let $A=A_{1}\oplus
A_{2}$, where $A_{1}=\bigoplus_{i=1}^{n_{1}}\mathbb{Z}_{m_{i}}$,
$A_{2}=\bigoplus_{i=n_{1}+1}^{n}\mathbb{Z}_{m_{i}}$. Then
$$D(A)\simeq A_{1}\rtimes (A_{2}\rtimes_{-1}\mathbb{Z}_{2})=A_{1}\rtimes D(A_{2})\simeq A_{1}\rtimes D(A/A_{1}).$$

\end{proposition}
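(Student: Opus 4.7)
The plan is to recognize $D(A)$ as a diagonal semidirect product with respect to the decomposition $A=A_1\oplus A_2$ and then to invoke Prop.\ref{4} directly. Writing an element of $A$ in the column notation of section \ref{5}, the twisting homomorphism of $D(A)$ acts by
$$\varphi(1)\left(\left[\begin{array}{c} g_1 \\ \hline g_2 \end{array}\right]\right)=\left[\begin{array}{c|c} -g_1 & 0 \\ \hline 0 & -g_2 \end{array}\right],$$
so the off-diagonal components $\varphi_{12}$ and $\varphi_{21}$ are trivial, and the diagonal components are $\varphi_{11}(1)=-\mathrm{id}_{A_1}$ and $\varphi_{22}(1)=-\mathrm{id}_{A_2}$. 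Hence $D(A)=(A_1\oplus A_2)\rtimes_{\varphi}\mathbb{Z}_2$ is a diagonal semidirect product in the sense of the definition preceding Prop.\ref{4}.

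Next I would apply Prop.\ref{4}(1) with $N_1=A_1$, $N_2=A_2$, $H=\mathbb{Z}_2$. This gives an isomorphism
$$D(A)\simeq A_1\rtimes_{\Phi_{11}}(A_2\rtimes_{\varphi_{22}}\mathbb{Z}_2),$$
where by the proof of Prop.\ref{4} the new twisting homomorphism is $\Phi_{11}(a_2,z)=\varphi_{11}(z)$, i.e.\ the pair $(a_2,z)$ acts on $A_1$ by $(-1)^z$. Since $\varphi_{22}$ is negation, the inner factor $A_2\rtimes_{\varphi_{22}}\mathbb{Z}_2$ is by definition $D(A_2)$, which yields the first two expressions in the statement:
$$D(A)\simeq A_1\rtimes D(A_2).$$

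Finally, for the last isomorphism, I would use that the short exact sequence $0\to A_1\to A\to A/A_1\to 0$ splits via $A_2$, so the projection $A\to A/A_1$ restricts to an isomorphism $A_2\xrightarrow{\sim}A/A_1$. This isomorphism is $\mathbb{Z}$-linear, hence intertwines the negation actions on $A_2$ and on $A/A_1$, so it induces a $\mathbb{Z}_2$-equivariant isomorphism $D(A_2)\simeq D(A/A_1)$, and consequently $A_1\rtimes D(A_2)\simeq A_1\rtimes D(A/A_1)$. There is essentially no obstacle here; the only mildly delicate point is checking that the induced action of $D(A_2)$ on $A_1$ coming from Prop.\ref{4} matches, under this identification, the action of $D(A/A_1)$ on $A_1$, and this is immediate since in both descriptions the subgroup $A_2$ (resp.\ $A/A_1$) acts trivially on $A_1$ and the $\mathbb{Z}_2$ factor acts by negation.
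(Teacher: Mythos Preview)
Your argument is correct and follows essentially the same route as the paper: recognize $D(A)$ as a diagonal semidirect product for the splitting $A=A_1\oplus A_2$, apply Prop.~\ref{4}(1), and identify the inner factor as $D(A_2)$. You in fact give slightly more detail than the paper does on the final identification $D(A_2)\simeq D(A/A_1)$, which the paper leaves implicit.
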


\begin{proof}
$D(A)=(A_{1}\oplus A_{2})\rtimes_{\varphi}\mathbb{Z}_{2}$, where
$\varphi(1)(g)=-g$, for any $g\in A$. Thus
$\varphi(g_{1},g_{2})=(-g_{1},-g_{2})$, for any $g_{i}\in G_{i}$.
It follows that $D(A)$ is a diagonal semidirect product with
respect to $A_{1}\oplus A_{2}$ decomposition. According to
Proposition \ref{4} we have that $D(A)\simeq
A_{1}\rtimes_{\Phi_{11}}(A_{2}\rtimes_{\varphi_{22}}
\mathbb{Z}_{2})=A_{1}\rtimes D(A_{2})$, where
$\Phi_{11}(g_{2},1)(g_{1})=\varphi_{11}(1)(g_{1})=-g_{1}$.
\end{proof}

\begin{example}
Let $G=D(\mathbb{Z}_{3}\oplus \mathbb{Z}_{5})$. $G$ can be defined
as a subgroup of $\Sigma_{8}$ generated by permutations $(1,2,3)$,
$(4,5,6,7,8)$ and $(1,2)(4,7)(5,6)$.
  Then $G\simeq
\mathbb{Z}_{3}\rtimes D_{2\cdot 5}\simeq \mathbb{Z}_{5}\rtimes
D_{2\cdot 3}$.
\end{example}

\subsubsection{Dihedral groups}\

Classic dihedral groups are special cases of generalized dihedral
groups when the base group is a cyclic group. We give a complete
description of semidirect decompositions of $D_{2n}$ using both
Proposition \ref{4} and ad hoc computations.

We use a classical presentation of dihedral groups:
$$D_{2n}=\langle a,x| a^n=e, x^2=e, xax=a^{n-1}\rangle = \langle
a\rangle \cup \langle a\rangle x. $$


We note that $D_{2}\simeq \mathbb{Z}_{2}$ and $D_{4}\simeq
\mathbb{Z}_{2}\times \mathbb{Z}_{2}$, in all other cases $D_{2n}$
is nonabelian.

\paragraph{Subgroups}

Let $n\in \mathbb{N}$, $n\ge 3$, $d\in \mathbb{N}$, $d|n$,
$m=\frac{n}{d}$. It is known that $D_{2n}$ has the following
subgroups, see \cite{C}.

\begin{enumerate}

\item For each $m\in \mathbb{N}$ such that $m|n$ there is a
subgroup
$$A_{m}=\langle a^{\frac{n}{m}}\rangle = \langle a^{d} \rangle =
\{e,a^{d},a^{2d},...,a^{(m-1)d}\}\simeq \mathbb{Z}_{m}.$$

$A_{m}\trianglelefteq D_{2n}$ for all $m$. The number of such
subgroups is $d(n)$ (the number of natural $n$-divisors).

\item For each $m\in \mathbb{N}$ such that $m|n$ and each $r\in
\mathbb{Z}_{\frac{n}{m}}=\mathbb{Z}_{d}$ there is a subgroup

$$B_{2m,r}=\langle a^{\frac{n}{m}},a^{r}x\rangle = \langle
a^{d},a^{r}x\rangle =\langle A_{m}, A_{m}(a^rx)\rangle \simeq
D_{2m}.$$

Note that $r\in \mathbb{Z}_{\frac{n}{m}}$ is identified with an
integer as described in the introduction.

The number of such subgroups is $\sigma(n)$ (the sum of natural
$n$-divisors).

 If $2|n$ then $B_{n,r}\trianglelefteq D_{2n}$. In all other
cases, if $1<m<n$ then $B_{2m,r}\not \trianglelefteq D_{2n}$.

\end{enumerate}

\paragraph{Classical decompositions} It known that
$D_{2n}\simeq \mathbb{Z}_{n} \rtimes_{\varphi} \mathbb{Z}_{2}$
where the twisting homomorphism is $\varphi(1)(g)=-g$. In internal
terms, $D_{2n}=A_{n}\rtimes B_{2,r}$, for all $r\in
\mathbb{Z}_{n}$. If $2|n$ and $4\not | n$, then $D_{2n}\simeq
D_{n}\times \mathbb{Z}_{2}$, or, in internal terms,
$D_{2n}=B_{n,r}\times A_{2}$. where $r\in \mathbb{Z}_{2}$. Again,
note, that second indices of $B$-type subgroups can be interpreted
as both integers and residues.

\paragraph{External semidirect decompositions of $D_{2n}$}Using Proposition \ref{4} we get an exaustive description of
external semidirect decompositions of $D_{2n}$.

\begin{proposition} \label{2}

\begin{enumerate}

\item $D_{2n}\simeq \mathbb{Z}_{m}\rtimes_{\varphi}
D_{\frac{2n}{m}}$, for any $m\in \mathbb{N}$, $m|n$, such that
$GCD(m,\frac{n}{m})=1$. $\varphi$ is defined as follows: if
$D_{\frac{2n}{m}}=\langle
a,x|a^{\frac{n}{m}}=e,x^2=e,xax=a^{-1}\rangle$ then
$\varphi(a)(1)=1$ and $\varphi(x)(1)=-1$.

\item $D_{2n}\simeq D_{n}\rtimes_{\varphi} \mathbb{Z}_{2}$, if
$n=2^{\alpha}q$, $\alpha\in \mathbb{N}$.  $\varphi$ is defined as
follows: if $D_{n}=\langle
a,x|a^{\frac{n}{2}}=e,x^2=e,xax=a^{-1}\rangle$ then
$\varphi(1)(a)=a^{-1}$ and $\varphi(1)(x)=ax$.

\item If $2|n$ and $4\not| n$ then $$D_{2n}\simeq D_{n}\times
\mathbb{Z}_{2}.$$

\item There are no other nontrivial external semidirect
decompositions of $D_{2n}$ in the following sense. If
$D_{2n}\simeq X\rtimes Y$, $|X|>1$, $|Y|>1$, then there are two
poosibilities:

\begin{enumerate}

\item[a)] $X=\mathbb{Z}_{m}$ and $Y=D_{\frac{2n}{m}}$, where
$m|n$, $GCD(m,\frac{n}{m})=1$ or

\item[b)] $X=D_{n}$ and $Y=\mathbb{Z}_{2}$, if $2|n$.

\end{enumerate}
\end{enumerate}

\end{proposition}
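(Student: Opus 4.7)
My plan is to establish parts~1--3 by exhibiting the decompositions explicitly, then prove the uniqueness assertion of part~4 by a direct enumeration using the subgroup classification recalled above. For part~1, the hypothesis $\gcd(m, n/m) = 1$ together with the Chinese Remainder Theorem yields $\mathbb{Z}_n \simeq \mathbb{Z}_m \oplus \mathbb{Z}_{n/m}$, and the inversion automorphism $g\mapsto -g$ of $\mathbb{Z}_n$ preserves each summand, so the generalized-dihedral twisting homomorphism is diagonal in the sense of Section~\ref{5}. Proposition~\ref{4}(1) then immediately gives $D_{2n} \simeq \mathbb{Z}_m \rtimes_{\Phi_{11}} (\mathbb{Z}_{n/m} \rtimes_{-1} \mathbb{Z}_2) = \mathbb{Z}_m \rtimes D_{2n/m}$, and the stated form of $\varphi$ is just the definition of $\Phi_{11}$: a rotational generator of $D_{2n/m}$ acts trivially on $\mathbb{Z}_m$, while the reflection acts by inversion.

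For parts~2 and~3, assuming $n$ is even, I would work inside $D_{2n}$. Take $X = B_{n,0} = \langle a^2, x\rangle$, which is normal of index~$2$ and isomorphic to $D_n$. The elements of $X$ are precisely the $a^{2i}$ and $a^{2i}x$, so $y := ax$ has order two and lies outside $X$; hence $X\cap \langle y\rangle = \{e\}$ and $D_{2n} = X\rtimes \langle y\rangle$. A direct computation gives $y a^2 y^{-1} = a^{-2}$ and $y x y^{-1} = a^2 x$, which matches the twisting homomorphism claimed in part~2. For part~3, assume additionally $4\nmid n$; then $a^{n/2}\notin \langle a^2\rangle \subseteq X$, while $a^{n/2}$ is central in $D_{2n}$ (since $x a^{n/2}x = a^{-n/2} = a^{n/2}$). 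Hence $\langle a^{n/2}\rangle$ is a central complement to $X$, and $D_{2n} = X\times \langle a^{n/2}\rangle = D_n\times \mathbb{Z}_2$.

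Part~4 is the substantive step. The plan is to exhaust the possibilities for the normal factor $X$ using the classification recalled above: $X$ must be either $A_m$ with $m\mid n$ and $1 < m \le n$, or (only if $n$ is even) one of the two index-two dihedral subgroups $B_{n,0}, B_{n,1}\simeq D_n$. If $X = A_m$, then a candidate complement $Y$ has order $2n/m$; splitting on whether $Y$ contains a reflection, a rotation-only $Y\le A_n$ forces the degenerate situation $m=n$ with $Y\simeq \mathbb{Z}_2$ (which fits family~(a) via $D_{2n/m}=D_2\simeq\mathbb{Z}_2$), while a $Y$ containing a reflection must be of the form $B_{2n/m,r}$ and the trivial-intersection condition $Y\cap A_m = A_{\gcd(n/m,m)} = \{e\}$ reduces exactly to $\gcd(m,n/m)=1$. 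If instead $X = B_{n,r}$, then $|Y|=2$ is forced, yielding family~(b). The main obstacle is the intersection computation $A_k\cap A_m = A_{\gcd(k,m)}$ inside the subgroup lattice of $\mathbb{Z}_n$; once that identity is in hand, the case analysis is routine and the two families listed in part~4 exhaust all nontrivial decompositions.
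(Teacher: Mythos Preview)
Your proposal is correct and follows essentially the same line as the paper: Proposition~\ref{4} for part~1, the explicit internal decomposition $B_{n,0}\rtimes\langle ax\rangle$ for part~2, the central complement $A_{2}=\langle a^{n/2}\rangle$ for part~3, and the subgroup enumeration for part~4. One small slip in part~4: a rotation-only complement $Y$ to $X=A_{m}$ is not a ``degenerate $m=n$'' case fitting family~(a) but is simply impossible, since then $X,Y\le A_{n}$ forces $XY\le A_{n}\subsetneq D_{2n}$; the correct conclusion is that $Y$ must always contain a reflection and hence be some $B_{2n/m,r}$, after which your intersection computation $A_{m}\cap A_{n/m}=A_{\gcd(m,n/m)}$ finishes the argument exactly as in the paper.
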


\begin{proof} Statements 1.,2.,3. are proved by exhibiting a suitable
internal semidirect decomposition.

1. We use the primary decomposition theorem for cyclic groups: if
$n=\prod_{i=1}^{k}p^{\alpha_{i}}_{i}$, then $\mathbb{Z}_{n}\simeq
\bigoplus_{i=1}^{k}\mathbb{Z}_{p^{\alpha_{i}}_{i}}$. The statement
follows from Proposition \ref{4}. Note that $Ker(\varphi)=\langle
a \rangle$.

Alternatively, we prove the same statement using the information
about $D_{2n}$-subgroups. We show that if $GCD(m,\frac{n}{m})=1$
then $D_{2n}=A_{m}\rtimes B_{\frac{2n}{m},r}$.

We have that $A_{m}\trianglelefteq D_{2n}$ and $|A_{m}|\cdot
|B_{\frac{2n}{m},r}|=2n=|D_{2n}|$. $A_{m}\cap B_{\frac{2n}{m}}\le
\langle a^{\frac{n}{m}}\rangle$. Considering subgroups of $\langle
a^{\frac{n}{m}} \rangle$  it follows that $A_{m}\cap
B_{\frac{2n}{m}}=\{e\}$. Thus $D_{2n}=A_{m}\rtimes
B_{\frac{2n}{m},r}\simeq \mathbb{Z}_{m}\rtimes_{\varphi}
D_{\frac{2n}{m}}$. A direct computation shows that $\varphi$ is as
stated: $(a^{m})a^{d}(a^{-m})=a^{d}$,
$(a^{r}x)a^{d}(a^{r}x)=a^{-d}$.

Note that if $2|n$ and $4\not |n$ then $A_{2}\cap B_{n,r}=\{e\}$,
$r\in \mathbb{Z}_{2}$, hence $D_{2n}=A_{2}\times B_{n,r}\simeq
\mathbb{Z}_{2}\times D_{n}$. In this case there are no nontrivial
semidirect decompositions of type $\mathbb{Z}_{2}\rtimes D_{n}$.

2. This case is not covered by Proposition \ref{4}, we show
directly that $D_{2n}=B_{n,0}\rtimes B_{2,1}$.

If $2|n$, then $B_{n,0}\trianglelefteq D_{2n}$. $|B_{n,0}|\cdot
|B_{2,1}|=|D_{2n}|$. It can be checked that $B_{n,0}\cap
B_{2,1}=\{e\}$: $B_{n,0}=\langle a^2, x\rangle$, $B_{2,1}=\langle
ax \rangle$.

Thus $D_{2n}\simeq D_{n}\rtimes \mathbb{Z}_{2}$. A direct
computation shows that $\varphi$ is as stated: $(ax)a^2$ $(ax)=$
$a^{-2}$ (the generator $a^2$ gets inverted), $(ax)x(ax)=a^2x$
(the generator $x$ gets multiplied by the other generator $a^2$).

3. Using Proposition \ref{4} we see that
$D_{2n}=D(\mathbb{Z}_{n})=(\mathbb{Z}_{2}\oplus ...)\rtimes
\mathbb{Z}_{2}\simeq \mathbb{Z}_{2}\times
D(\mathbb{Z}_{n}/\mathbb{Z}_{2})\simeq \mathbb{Z}_{2}\times
D_{n}$.

It can also be proved using the list of subgroups. We remind that
$D_{2n}=B_{n,0}\times A_{2}\simeq D_{n}\times \mathbb{Z}_{2}$ for
the following reasons. Both subgroups are normal. $|B_{n,0}|\cdot
|A_{2}|=|D_{2n}|$. $B_{n,0}=\langle a^2, x\rangle$, $A_{2}=\langle
a^{\frac{n}{2}}\rangle$, $\frac{n}{2}$ is odd, therefore
$B_{n,0}\cap A_{2}=\{e\}$.

4. Consider all possible internal semidirect decompositions of
$D_{2n}$.

If $D_{2n}=X\rtimes Y$ then $X$ must be a normal subgroup of
$D_{2n}$ therefore $X$ must be $A_{m}$ or $B_{n,r}$ with $2|n$.

If $X=A_{m}$ then $Y$ must be $B_{m',r}$ ir order to generate
$D_{2n}$, with $m'=\frac{2n}{m}$. $A_{m}\cap
B_{\frac{2n}{m},r}=\{e\}$ iff $GCD(n,\frac{n}{m})=1$.

Let $X=B_{n,r}$ with $2|n$, $r\in \mathbb{Z}_{2}$. There are $n+1$
subgroups of $D_{2n}$ having order $2$: $B_{2,r}\ $, $r\in
\mathbb{Z}_{n}$ and $A_2=\langle a^{\frac{n}{2}}\rangle$. For any
$n$ such that $2|n$ this gives a semidirect decomposition of type
$D_{n}\rtimes \mathbb{Z}_{2}$. If $4\not |n$ then $A_{2}\cap
B_{n,r}=\{e\}$ which gives a direct decomposition $D_{n}\times
\mathbb{Z}_2$.
\end{proof}

%
%
%
%

\begin{remark} In terms of prime factorization the condition $GCD(m,\frac{n}{m})=1$ is
equivalent to the fact that $m$ and $\frac{n}{m}$ are products of
full prime powers of the prime factorization of $n$. Existence of
many members of this family also follows from Schur-Zassenhaus
theorem. If $m|n$ and $GCD(m,\frac{n}{m})=1$ then
$GCD(|A_{m}|,|D_{2n}/A_{m}|)=1$, $D_{2n}/A_{m}\simeq
D_{\frac{2n}{m}}$ and, hence $D_{2n}\simeq A_{m}\rtimes
D_{\frac{2n}{m}}$.

\end{remark}

\begin{remark} Note that there are at most $2$ external semidirect decompositions when $n$ is a
prime power:
\begin{enumerate}

\item if $n=p^{\alpha}$, $p$ an odd prime, then there is only one
(classical) external semidirect decomposition:
$D_{2p^{\alpha}}\simeq \mathbb{Z}_{p^{\alpha}}\rtimes
\mathbb{Z}_{2}$,

\item if $n=2^{\alpha}$, $\alpha\ge 3$, then there are two
external semidirect decompositions: $D_{2\cdot 2^{\alpha}}\simeq
\mathbb{Z}_{2^{\alpha}}\rtimes \mathbb{Z}_{2}\simeq
D_{2^{\alpha}}\rtimes \mathbb{Z}_{2}$.

\end{enumerate}

\end{remark}

\begin{remark}
The image of the twisting homomorphism in each case of a proper
semidirect product is isomorphic to $\mathbb{Z}_{2}$. If the
extending group is not $\mathbb{Z}_{2}$, then the twisting
homomorphism is not injective.

\end{remark}

\begin{example}


%
%
%
%

External semidirect decompositions of $D_{2\cdot 30}$:

$D_{60}\simeq \mathbb{Z}_{30}\rtimes \mathbb{Z}_{2}\simeq
\mathbb{Z}_{6}\rtimes D_{10}\simeq \mathbb{Z}_{10}\rtimes
D_{6}\simeq\mathbb{Z}_{15}\rtimes D_{4}\simeq\mathbb{Z}_{3}\rtimes
D_{20}\simeq \mathbb{Z}_{5}\rtimes D_{12}\simeq$

$\simeq D_{30} \rtimes \mathbb{Z}_{2}\simeq D_{30}\times
\mathbb{Z}_{2}$.

\end{example}


\paragraph{Internal semidirect decompositions of $D_{2n}$} We now describe all internal semidirect decompositions of
$D_{2n}$.
\newpage

\begin{proposition} Let $n\in \mathbb{N}$.

\begin{enumerate}

\item If $m\in \mathbb{N}$, $m|n$, is such that
$GCD(m,\frac{n}{m})=1$, then $$D_{2n} = A_{m}\rtimes
B_{\frac{2n}{m},r},$$ for all $r\in \mathbb{Z}_{m}$.

\item If $n=2^{\alpha}q$, $\alpha\in \mathbb{N}$, then
$$D_{2n}=B_{n,0}\rtimes B_{2,r_{1}}=B_{n,1}\rtimes B_{2,r_{0}}$$
where $r_{i}\in \mathbb{Z}_{n}$, $r_{i}\equiv i(mod\ 2)$.

\item If $2|n$ and $4\not n$ then $D_{2n}=B_{n,0}\times A_{2}$ and
$D_{2n}=B_{n,1}\times A_{2}$.

\item There are no other internal semidirect decompositions of
$D_{2n}$.

\end{enumerate}

\end{proposition}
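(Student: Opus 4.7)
The plan is to apply the internal decomposition criterion ($X \trianglelefteq D_{2n}$, $|X|\cdot|Y|=2n$, $X\cap Y=\{e\}$) and carry out a case analysis on the normal factor $X$, exactly in the spirit of part 4 of Proposition \ref{2}. From the subgroup list the normal subgroups of $D_{2n}$ are precisely the cyclic $A_m$ with $m\mid n$, together with $B_{n,0}$ and $B_{n,1}$ when $2\mid n$. So I need to determine, for each such $X$, all subgroups $Y$ of the complementary order which meet $X$ trivially.

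For $X=A_m$, any complement $Y$ must have order $2n/m$, which forces $Y=A_{2n/m}$ or $Y=B_{2n/m,r}$ for some $r\in \mathbb{Z}_{m}$. The first is ruled out since $A_m$ and $A_{2n/m}$ both sit in $\langle a\rangle$, a group of order only $n$. Hence $Y=B_{2n/m,r}$. Because every reflection in $B_{2n/m,r}$ has the form $a^kx\notin A_m$, the intersection $A_m\cap B_{2n/m,r}$ lives inside $\langle a\rangle\cong \mathbb{Z}_n$, where the two factors have orders $m$ and $n/m$; the intersection is therefore trivial iff $GCD(m,n/m)=1$. This will yield statement 1 together with the full parametrisation by $r$.

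For $X=B_{n,s}$ with $2\mid n$ and $s\in\{0,1\}$, the complement has order $2$, hence equals $A_2=\langle a^{n/2}\rangle$ or $B_{2,r}=\langle a^r x\rangle$ for some $r\in \mathbb{Z}_n$. The nonidentity element $a^{n/2}$ lies in $B_{n,s}=\langle a^2,a^s x\rangle$ iff $n/2$ is even, so $Y=A_2$ gives a valid (and automatically direct, since $A_2$ and $B_{n,s}$ are both normal) decomposition precisely when $2\mid n$ and $4\nmid n$; this is statement 3. The nonidentity element $a^r x$ lies in $B_{n,s}$ iff $r\equiv s\,(mod\ 2)$, so $Y=B_{2,r}$ works iff $r\not\equiv s\,(mod\ 2)$, giving statement 2. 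Statement 4 is then immediate: since every normal subgroup has been exhausted and every admissible complement to each has been identified, no further decompositions exist. The main obstacle I foresee is purely combinatorial bookkeeping -- remembering when $A_2\subseteq B_{n,s}$ and when $a^r x\in B_{n,s}$ -- but both reduce to parity conditions inside $\langle a\rangle$, so no delicate argument should be required.
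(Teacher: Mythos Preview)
Your proposal is correct and follows essentially the same route as the paper: classify the possible normal factors $X$ (the $A_m$ and, when $2\mid n$, the $B_{n,s}$), list the subgroups of complementary order, and reduce each intersection condition to an elementary divisibility or parity check inside $\langle a\rangle$. If anything, your outline is slightly more careful than the paper's, since you explicitly rule out $Y=A_{2n/m}$ as a complement to $A_m$ (by noting both live in $\langle a\rangle$ of order $n$), whereas the paper simply restricts attention to complements of $B$-type without comment.
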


\begin{proof}

1. We look for internal semidirect decompositions of $D_{2n}$ in
form $A_{m}\rtimes B_{m',r}$. We must have $m'=\frac{2n}{m}$ and
$r\in \mathbb{Z}_{m}$. $A_{m}\cap B_{\frac{2n}{m},r}=\{e\}$ iff
$GCD(m,\frac{n}{m})=1$. Thus $D_{2n}=A_{m}\rtimes
B_{\frac{2n}{m},r}$ for all $m$ such that $GCD(m,\frac{n}{m})=1$
and all $r\in \mathbb{Z}_{m}$ are the only possible decompositions
of this kind.

2. We look for internal semidirect decompositions of $D_{2n}$ in
form $B_{m,r}\rtimes B_{m',r'}$. We must have
$B_{m,r}\trianglelefteq D_{2n}$ therefore $2|n$, $m=n$ and $r\in
\mathbb{Z}_{2}$, thus we have two possible decomposition series:
$B_{n,0}\rtimes B_{2,r'}$ and $B_{n,1}\rtimes B_{2,r''}$. To
ensure trivial intersections of semidirect factors we must have
$r'\equiv 1(mod\ 2)$ and $r''\equiv 0(mod\ 2)$.

3. If $2|n$ and $4\not n$ then $B_{n,0}\cap A_{2}=B_{n,1}\cap
A_{2}=\{e\}$ where all subgroups are normal.

4. It follows from the previous arguments.
\end{proof}

\paragraph{Permutation representations of dihedral groups}

Finally we find minimal degrees of faithful permutation
representations of $D_{2n}$. If $n$ is not a prime power then
these numbers are smaller than degrees of classical permutation
representations of dihedral groups. This is a consequence of
\ref{2} and Karpilovsky bounds for finite abelian groups \cite{K}.

Let $\mu(G)$ be the minimal faithful permutation representation
degree of $G$, i.e. the minimal $n\in \mathbb{N}$ such that there
is an injective group homomorphism $G\rightarrow \Sigma_{n}$. It
is known that for finite groups $G,H$ and a group homomorphism
$\varphi:H\rightarrow Aut(G)$ we have that $\mu(G\rtimes_{\varphi}
H)\le |G|+\mu(H)$. If, additionally, $\varphi$ is injective, then
$\mu(G\rtimes_{\varphi} H)\le |G|$.

\begin{proposition} Let $n=\prod_{i}p_{i}^{\alpha_{i}}$ be the prime
factorization of $n\in \mathbb{N}$. Then $\mu(D_{2n})=
\sum_{i}p_{i}^{\alpha_{i}}$.

\end{proposition}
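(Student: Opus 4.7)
The plan is to establish matching bounds $\mu(D_{2n}) \ge \sum_{i} p_{i}^{\alpha_{i}}$ and $\mu(D_{2n}) \le \sum_{i} p_{i}^{\alpha_{i}}$, each by a short argument.

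For the \emph{lower bound}, I would exploit that the rotation subgroup $\langle a\rangle \simeq \mathbb{Z}_{n}$ sits inside $D_{2n}$. Any faithful permutation representation of $D_{2n}$ restricts to a faithful representation of this subgroup, so $\mu(D_{2n}) \ge \mu(\mathbb{Z}_{n})$. The primary decomposition $\mathbb{Z}_{n} \simeq \bigoplus_{i} \mathbb{Z}_{p_{i}^{\alpha_{i}}}$ together with the Karpilovsky formula for finite abelian groups \cite{K} then yields $\mu(\mathbb{Z}_{n}) = \sum_{i} p_{i}^{\alpha_{i}}$.

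For the \emph{upper bound}, I would invoke Proposition \ref{2}(1) iteratively. Since $\gcd(p_{1}^{\alpha_{1}}, n/p_{1}^{\alpha_{1}}) = 1$, Proposition \ref{2} gives $D_{2n} \simeq \mathbb{Z}_{p_{1}^{\alpha_{1}}} \rtimes D_{2n/p_{1}^{\alpha_{1}}}$, and feeding this into the general inequality $\mu(G\rtimes_{\varphi}H) \le |G|+\mu(H)$ recorded just above the proposition produces
\[
\mu(D_{2n}) \le p_{1}^{\alpha_{1}} + \mu(D_{2n/p_{1}^{\alpha_{1}}}).
\]
Peeling off $p_{2}^{\alpha_{2}}, \ldots, p_{k-1}^{\alpha_{k-1}}$ one at a time in the same manner, and closing the iteration at the terminal prime-power factor $D_{2p_{k}^{\alpha_{k}}} \simeq \mathbb{Z}_{p_{k}^{\alpha_{k}}} \rtimes_{\varphi} \mathbb{Z}_{2}$ (where $\varphi$ is injective, so the sharper bound $\mu \le |\mathbb{Z}_{p_{k}^{\alpha_{k}}}| = p_{k}^{\alpha_{k}}$ applies), the telescoping sum collapses to exactly $\sum_{i} p_{i}^{\alpha_{i}}$.

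The main bookkeeping obstacle is the terminal step: applying the coarser $|G|+\mu(H)$ estimate there would add a spurious $\mu(\mathbb{Z}_{2}) = 2$ and break equality. The injectivity of the twisting homomorphism at the base of the iteration is exactly what unlocks the sharper bound and makes the two estimates align. Beyond this minor bookkeeping, the argument is a clean synthesis of Proposition \ref{2}, the two recorded semidirect-product degree inequalities, and Karpilovsky's formula, with no genuinely delicate step.
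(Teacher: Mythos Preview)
Your proposal is correct and matches the paper's proof essentially line for line: the lower bound via the cyclic subgroup $\mathbb{Z}_{n}\le D_{2n}$ and Karpilovsky's formula, and the upper bound via iterated application of Proposition~\ref{2}(1) together with the inequality $\mu(G\rtimes_{\varphi}H)\le |G|+\mu(H)$, invoking injectivity of the twisting homomorphism at the terminal step to avoid the spurious $+\mu(\mathbb{Z}_{2})$. There is nothing to add.
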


\begin{proof} First we prove that
\begin{equation} \label{8} \mu(D_{2n})\le \sum_{i}p_{i}^{\alpha_{i}}.
\end{equation}
By statement 1. of \ref{2} we have that $D_{2n}\simeq
\mathbb{Z}_{p_{1}^{\alpha_{1}}}\rtimes D_{2n_{1}}$, where
$n_{1}=\frac{n}{p_{1}^{\alpha_{1}}}$. Thus $\mu(D_{2n})\le
p_{1}^{\alpha_{1}}+\mu(D_{2n_{1}})$. \ref{8} follows by induction
in $i$ using injectivity of the twisting homomorphism at the last
step.

To prove the opposite inequality and the statement, we note that
$D_{2n}\simeq \mathbb{Z}_{n}\rtimes \mathbb{Z}_{2}$, thus
$\mathbb{Z}_{n}\le D_{2n}$. It implies $\mu(\mathbb{Z}_{n})\le
\mu(D_{2n})$, therefore $\sum_{i}p_{i}^{\alpha_{i}}\le
\mu(D_{2n})$ by the Karpilovsky theorem for abelian groups
\cite{K}.
\end{proof}

\begin{example} $\min_{n\in \mathbb{N}}\{n: \mu(D_{2n})<n\}=6$: $\mu(D_{2\cdot 6})=5$, $D_{2\cdot 6}$ can be generated by $(1,2,3)$, $(1,2)$, $(4,5)$.
If, additionally, $D_{2n}$ is directly indecomposable, then the
minimum is $12$: $\mu(D_{2\cdot 12})=7$, $D_{2\cdot 12}$ can be
generated by $(1,2,3,4)$, $(5,6,7)$, $(1,3)(5,6)$.

\end{example}

\section{Conclusion} We have obtained results showing possibility of
various semidirect decompositions of a given semidirect product in
two cases: 1) if the original twisting homomorphism is diagonal
and the base group is directly decomposable and 2) if the
extending group is directly decomposable. These results may
stimulate further interest in looking for analogues of
Krull-Remak-Schmidt theorem type results for semidirect and
Zappa-Szep products.

We have presented semidirect decompositions of generalized
dihedral groups and classical dihedral groups as an application.
Apart from semidirect decompositions guarranteed by the general
proposition \ref{4}, for $D_{2n}$ there are additional
decompositions of external type $D_{n}\rtimes \mathbb{Z}_{2}$ if
$2|n$.

Semidirect decompositions of dihedral groups give the exact value
of $\mu(D_{2n})$.

\section*{Acknowledgement} Computations were performed using the
computational algebra system MAGMA, see Bosma et al. \cite{B}.


\end{document}